\renewcommand{\natural}{{\mathbb{N}}}
\newcommand{\real}{{\mathbb{R}}}
\newcommand{\subscr}[2]{{#1}_{\textup{#2}}}
\newcommand{\union}{\cup}
\newcommand{\map}[3]{#1: #2 \rightarrow #3}
\newcommand{\setdef}[2]{\left\{#1 \; | \; #2\right\}}
\newcommand{\until}[1]{\{1,\dots,#1\}}
\newcommand{\const}{c}
\newcommand{\on}{\mathrm{ON}}
\newcommand{\off}{\mathrm{OFF}}
\newcommand{\lambdamax}{\lambda_{\text{max}}}
\newcommand{\stablexeq}{x_\text{eq}^1}
\newcommand{\unstablexeq}{x_\text{eq}^2}
\newcommand{\xeq}{x_{\mathrm{eq}}}
\newcommand{\xth}{x_\mathrm{th}}
\newcommand{\servicetimefunc}{\mathcal{S}}
\newcommand{\nofreetime}{T_f}
\newcommand{\controlset}{\mathcal{U}}
\newcommand{\utb}{u_{\mathrm{TP}}}
\newcommand{\rfunc}{\mathcal{R}}
\newcommand{\smin}{\mathcal{S}_{\mathrm{min}}}
\newcommand{\smax}{\mathcal{S}_{\mathrm{max}}}
\newcommand{\xmin}{x_\mathrm{min}}
\newcommand{\lambdaeq}{\subscr{\lambda}{eq}^{\textup{max}}}
\newtheorem{theorem}{Theorem}[section]
\newtheorem{lemma}[theorem]{Lemma}
\newtheorem{remark}[theorem]{Remark}
\newcommand\oprocendsymbol{\hbox{$\square$}}
\newcommand\oprocend{\relax\ifmmode\else\unskip\hfill\fi\oprocendsymbol}
\begin{document}

\title{Maximally Stabilizing Task Release Control Policy for a Dynamical Queue}

\author{Ketan Savla \thanks{K. Savla and E. Frazzoli are with the Massachusetts Institute of
    Technology.  \texttt{\{ksavla,frazzoli\}@mit.edu}. A preliminary version of this paper appeared in part as \cite{Savla.Nehme.ea:GNC08}. This work was supported in part by the Michigan/AFRL Collaborative Center on Control Science, AFOSR grant no. FA 8650-07-2-3744.
The authors thank Thomas Temple for helpful discussions. } \qquad Emilio Frazzoli}

\maketitle

\begin{abstract}
%<<<<<<< .mine
%In this paper, we introduce a model of dynamical queue, in which the service time depends on 
%the server utilization history, and consider its stability under deterministic task arrivals. The analysis of such a queueing model is motivated by widely accepted empirical laws describing human performance as a function of mental arousal. First, we compute the maximum stabilizable task arrival rate under a task release control architecture which regulates task entry into service. Second, we study equilibrium conditions for the queue, showing the existence of both stable and unstable equilibria for all stabilizable arrival rates. Finally, we propose a simple policy that releases tasks to the server only if a threshold condition is satisfied, and prove that this policy ensures stability of the queue for all stabilizable arrival rates.
%=======
In this paper, we introduce a model of dynamical queue, in which the service time depends on the server utilization history.
%, and consider its stability under deterministic task arrivals. 
The proposed queueing model is motivated by widely accepted empirical laws describing human performance as a function of mental arousal. 
%In this paper, we consider the following stability problem for a novel dynamical queue. Independent and identical tasks arrive at a deterministic rate and their entry into service is regulated by a task release controller. The server spends a state-dependent deterministic amount of time to service every task, where the server state is governed by its utilization history through a simple dynamical model. Inspired by empirical laws for human performance as a function of mental arousal, we let the service time be related to the server state by a continuous convex function. 
The objective of this paper is to design task release control policies that can stabilize the queue for the maximum possible arrival rate, assuming deterministic arrivals. 
%where the queue is said to be stable if the number of tasks awaiting service does not grow unbounded over time. 
First, we prove an upper bound on the maximum possible stabilizable arrival rate for any task release control policy.
%, by postulating a notion of one-task equilibrium for the dynamical queue and exploiting its optimality. 
Then, we propose a simple threshold  policy that releases a task to the server only if its state is below a certain fixed value. Finally, we prove that this task release control policy ensures stability of the queue for the maximum possible arrival rate. 

%In this paper, we consider the following stability problem for a novel dynamical queue. Identical tasks arrive for a queue at a deterministic rate. The server spends a state-dependent deterministic amount of time to service every task, where the server state is governed by its utilization history through a simple dynamical model. Inspired by empirical laws for human performance as a function of mental arousal, we let the service time be related to the server state by a continuous convex function. We consider a task release control architecture which regulates task entry into service. The objective in this paper is to design such task release control policies that can stabilize the dynamical queue for the maximum possible arrival rate, where the queue is said to be stable if the number of tasks awaiting service does not grow unbounded over time. First, we prove an upper bound on the maximum possible stabilizable arrival rate for any task release control policy by postulating a notion of one-task equilibrium for the dynamical queue and exploiting its optimality. Then, we propose a simple threshold  policy that releases a task to the server only if its state is below a certain fixed value. We prove that this task release control policy ensures stability of the queue for the maximum possible arrival rate. 
\end{abstract}

\section{Introduction}

In this paper, we introduce a novel model for a dynamical queue, in which service times depend on the utilization history of the server. In other words, we consider the server as a dynamical system, and model the service time as a function of its state. Given this model, we consider the case in which new tasks arrive at a deterministic rate, and propose a task release control architecture that schedules the beginning of service of each task after its arrival. We propose a simple threshold  policy that releases a task to the server only if its state is below a certain fixed value. The proposed task release control policy is proven to ensure stability of the queue for the maximum possible arrival rate, where the queue is said to be stable if the number of tasks awaiting service does not grow unbounded over time.
  
Queueing theory is a framework to study systems with waiting lines.
% is used to model several scenarios in commerce, industry, health-care, public service and engineering domains. Some of the quantities of interest in queueing systems are maximum sustainable workload, average waiting time, etc. 
 An extensive treatment of queueing systems can be found in several texts, e.g., see \cite{Kleinrock:75, Asmussen:03}. The queueing system considered in this paper falls in the category of queueing systems with state-dependent parameters, e.g., see \cite{Dshalalow:97}. In particular, we consider a queueing system with state-dependent service times.
Such systems are useful models for many practical situations, especially when the server corresponds to a human operator in a broad range of settings including, for example, human operators supervising Unmanned Aerial Vehicles, and job floor personnel in a typical production system. The model for state-dependent service times in this paper is inspired by a well known empirical law from psychology---the Yerkes-Dodson law~\cite{Yerkes.Dodson.1908}---which states that human performance increases with mental arousal up to a point and decreases thereafter. Our model in this paper is in the same spirit as the one in \cite{Bekker.Borst.06}, where the authors consider a state-dependent queueing system whose service rate is first increasing and then decreasing as a function of the amount of outstanding work. However, our model differs in the sense that the service times are related to the utilization history rather than the outstanding amount of work. A similar model has also been reported in the human factors literature, e.g., see \cite{Cummings.Nehme:09}. 

The control architecture considered in this paper falls under the category of \emph{task release control}, which has been typically used in production planning to control the release of jobs to a production system in order to deal with machine failures, input fluctuations and variations in operator workload (see, e.g.,~\cite{Glassey.Resende:88,Bertrand.VanOoijen:02}). The task release control architecture is different from an \emph{admission control} architecture, e.g., see \cite{Stidham:85,Bekker.Borst.06}, where the objective is, given a measure of the \emph{quality of service} to be optimized, to determine criteria on the basis of which to accept or reject incoming tasks. In the setting of this paper, no task is dropped and the task release controller simply acts like a switch regulating access to the server and hence effectively determines the schedule for the beginning of service of each task after its arrival. 

The contributions of the paper are threefold. First, we propose a novel dynamical queue, whose server characteristics are inspired by empirical laws relating human performance to utilization history. Second, we provide an upper bound on the maximum possible stabilizable arrival rate for any task release control policy by postulating a notion of one-task equilibrium for the dynamical queue and exploiting its optimality. Third, we propose a simple threshold policy that matches this bound, thereby also giving the stability condition for this queue. 

%The paper is organized as follows. In Section~\ref{sec:problem-formulation}, we propose the dynamical queue model and formulate the problem. In Section~\ref{sec:upper-bound}, we derive a sharp upper bound on the maximum possible stabilizable arrival rate for the 
%dynamical queue. In Section~\ref{sec:control}, we propose a simple task release control policy and prove that it can maintain stability of the queue under the maximum possible stabilizable arrival rate. Finally, in Section~\ref{sec:conclusions} we conclude with remarks about future directions.

%\vspace{-0.1cm}
\section{Problem Formulation}
\label{sec:problem-formulation}
Consider the following deterministic single-server queue model. Tasks arrive periodically, at rate $\lambda$, i.e., a new task arrives every $1/\lambda$ time units. The tasks are identical and independent of each other and need to be serviced in the order of their arrival. We next state the dynamical model for the server, which determines the service times for each task.

\subsection{Server Model}
Let $x(t)$ be the server state at time $t$, and let $b: \mathbb{R} \to \{0, 1\}$ be such that $b(t)$ is 1 if the server is busy at time $t$, and 0 otherwise. 
The evolution of $x(t)$ is governed by a simple first-order model:
\begin{equation}
\dot{x}(t)=\left( b(t)-x(t) \right) / \tau,  \qquad x(0)=x_0,
\label{eq:server-dynamics}
\end{equation}
where $\tau$ is a time constant that determines the extent to which past utilization affects the current state of the server, and $x_0 \in [0,1]$ is the initial condition. Note that the set $[0,1]$ is invariant under the dynamics in Equation~\eqref{eq:server-dynamics} for any $\tau>0$ and any $b: \mathbb{R} \to \{0, 1\}$.

The service times are related to the state $x(t)$ through a map $\map{\servicetimefunc}{[0,1]}{\real_{>0}}$. If a task is allocated to the server at state $x$, then the service time rendered by the server on that task is $\servicetimefunc(x)$.
Since the controller cannot interfere the server while it is servicing a task, the only way in which it can control the server state is by scheduling the beginning of service of tasks after their arrival. Such controllers are called task release controllers and will be formally characterized later on.
In this paper we assume that:
%\vspace{-2mm}
%\begin{center}
$\servicetimefunc(x)$ is positive valued, continuous and convex. %\\
%\end{center}
%\vspace{-2mm}
Let $\smin:=\min\setdef{\servicetimefunc(x)}{x \in [0,1]}$, and $\smax:=\max\{\servicetimefunc(0),\servicetimefunc(1)\}$.

The solution to Equation~\eqref{eq:server-dynamics} is %given by
%\begin{equation}
%\label{eq:solution}
$x(t)=e^{-t/\tau} \left(\int_0^t  \frac{1}{\tau} b(s) e^{s/\tau} ds + x_0 \right)$.
%\end{equation}
This implies that the server state $x(t)$ is increasing at times when the server is busy, i.e, when $b(t)=1$, and decreasing at times when the server is not busy, i.e., when $b(t)=0$. Note that $\servicetimefunc(x)$ is not necessarily  monotonically increasing in $x$, since it has been noted in the human factors literature (e.g., see \cite{Yerkes.Dodson.1908}) that, for certain cognitive tasks demanding persistence, the performance (which in our case would correspond to the inverse of $\servicetimefunc(x)$) could \emph{increase} with the state $x$ when $x$ is small. This is mainly because a certain minimum level of mental arousal is required for good performance. %Moreover, our assumption on $\servicetimefunc(x)$ being convex does not rule out the case when $\servicetimefunc(x)$ is increasing in $x$. 
An experimental justification of this server model in the context of humans-in-loop systems is included in our earlier work~\cite{Savla.Nehme.ea:GNC08}, where $\servicetimefunc(x)$ for that setup was found to have a U-shaped profile.

%\begin{remark}
%An experimental justification of this server model in the context of humans-in-loop systems is included in our earlier work~\cite{Savla.Nehme.ea:GNC08}.
%\end{remark}

\subsection{Task Release Control Policy}
We now describe task release control policies for the dynamical queue. Without explicitly specifying its domain, a task release controller $u$ acts like an on-off switch at the entrance of the queue. Therefore, in short, $u$ is a task release control policy if 
$u(t) \in \{\on,\off\}$ for all $t \geq 0$, and an outstanding task is assigned to the server if and only if the server is idle, i.e., when it is not servicing a task, \emph{and} when $u=\on$. Let $\controlset$ be the set of all such task release control policies. Note that we allow $\controlset$ to be quite general in the sense that it includes control policies that are functions of $\lambda$, $\servicetimefunc$, $x$, etc.

\subsection{Problem Statement}
We now formally state the problem. For a given $\tau>0$, let $n_u(t,\tau,\lambda,x_0,n_0)$ be the queue length, i.e., the number of outstanding tasks, at time $t$, under task release control policy $u \in \controlset$, when the task arrival rate is $\lambda$ and the server state and the queue length at time $t=0$ are $x_0$ and $n_0$ respectively. Define the maximum stabilizable arrival rate for policy $u$ as:
%\vspace{-0.2cm}
%\begin{equation*}
%\lambdamax(\tau,u)=\sup \setdef{\lambda}{\exists x_0 \in [0,1], n_0 \in \natural \text{ s.t.} \limsup_{t \to +\infty} n_u(t,\tau,\lambda,x_0,n_0) < +\infty}.
%\end{equation*}
\begin{equation*}
\lambdamax(\tau,u)=\sup \setdef {\lambda}{\limsup_{t \to +\infty} n_u(t,\tau,\lambda,x_0,n_0) < +\infty,  \forall x_0 \in [0,1], \quad \forall n_0 \in \natural}.
\end{equation*}

The maximum stabilizable arrival rate over all policies is defined as
%\begin{equation*}
$\lambdamax^*(\tau)=\sup_{u \in \controlset} \lambdamax(\tau,u)$.
%\end{equation*}
A task release control policy $u$ is called \emph{maximally stabilizing} if, for any $x_0 \in [0,1]$, $n_0 \in \natural$, $\tau>0$, $\limsup_{t \to +\infty} n_u(t,\tau,\lambda,x_0,n_0) < +\infty$ for all $\lambda \leq \lambdamax^*(\tau)$,
The objective in this paper is to design a maximally 
stabilizing task release control policy for the dynamical queue whose server state evolves according to Equation~\eqref{eq:server-dynamics}, and whose service time function $\servicetimefunc(x)$ is positive, continuous and convex.

%\subsection{The D/D/1 Queue}
%It is instructive to compare our setup with the standard D/D/1 queue~\cite{Kleinrock:75}, where it is known that the trivial policy $u(t) \equiv \on$ is maximally stabilizing.

%independent and identical tasks arrive at a deterministic rate of $\lambda>0$ and the service time for each task is constant $s>0$. In that case, it is known that the maximum stabilizable arrival rate is $1/s$ and that the trivial policy $u(t) \equiv \on$ is maximally stabilizing.  
%In our formulation, the service times are state-dependent and the server state is a function of its utilization profile. Therefore, a simple stability condition or a task release controller is not obvious. Note that, in the limit as $\tau \to + \infty$ in Equation~\eqref{eq:server-dynamics} and/or setting $\servicetimefunc(x) \equiv c$ for some constant $c>0$ corresponds to the standard D/D/1 queue setting.

\section{Upper Bound}
\label{sec:upper-bound}
In this section, we prove an upper bound on $\lambdamax^*(\tau)$. We do this in several steps. We start by introducing a notion of \emph{one-task equilibrium} for the dynamical queue under consideration.

%\vspace{-0.2cm}
\subsection{One-task Equilibrium}
Let $x_i$ be the server state at the beginning of service of the $i$-th task and let the queue length be zero at that instant. The server state upon the arrival of the $(i+1)$-th task is then obtained by integration of \eqref{eq:server-dynamics} over the time period $[0, 1/\lambda]$, with initial condition $x_0 = x_i$. Let $x_i'$ denote the server state when it has completed service of the $i$-th task. Then, $x_i'=1-(1-x_i)e^{-\servicetimefunc(x_i)/\tau}$. Assuming that  $\servicetimefunc(x_i) \leq 1/\lambda$, we get that $x_{i+1} = x_i' e^{-\left(1/\lambda - \servicetimefunc(x_i)\right)/\tau},$ and finally
%\begin{equation*} 
$x_{i+1} =  (1-(1-x_i)e^{-\servicetimefunc(x_i)/\tau})e^{(\servicetimefunc(x_i)-1/\lambda)/\tau} 
=  \left(x_i-1+e^{\servicetimefunc(x_i)/\tau} \right)e^{-\frac{1}{\lambda\tau}}$.
%\end{equation*}
If $\lambda$ and $\tau$ are such that $x_{i+1}=x_i$, then under the trivial control policy $u(t) \equiv \on$, the server state at the beginning of all the tasks after and including the $i$-th task will be $x_i$. We then say that the server is at \emph{one-task equilibrium} at $x_i$.
Therefore, for a given $\lambda$ and $\tau$, the one-task equilibrium server states correspond to $x \in [0,1]$ that satisfy 
$x = \left(x-1+e^{\servicetimefunc(x)/\tau}\right)e^{-\frac{1}{\lambda\tau}}$ and $\servicetimefunc(x) \leq 1/\lambda$,
i.e.,  
$\servicetimefunc(x) = \tau \log \left(1-(1-e^{\frac{1}{\lambda\tau}})x\right)$ and $\servicetimefunc(x) \leq 1/\lambda$. 
Let us define a map $\map{\rfunc}{[0,1] \times \real_+ \times \real_+}{\real_+}$ as:
%\vspace{-0.2cm}
\begin{equation}
\label{eq:r-def}
\rfunc(x,\tau,\lambda) := \tau \log \left(1-(1-e^{\frac{1}{\lambda\tau}})x\right).
\end{equation}
The following result establishes a key property of $\rfunc(x,\tau,\lambda)$.

\begin{lemma}
\label{lem:r-concave}
For any $\tau>0$ and $\lambda>0$, the function $\rfunc$ defined in Equation~\eqref{eq:r-def} is strictly concave in $x$, and $\frac{\partial}{\partial x} \rfunc(x,\tau,\lambda) > 0$ for all $x \in [0,1]$.
\end{lemma}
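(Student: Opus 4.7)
The plan is to prove both claims by direct differentiation, using the fact that $\frac{1}{\lambda\tau} > 0$ implies $e^{1/(\lambda\tau)} > 1$, so that the quantity $\alpha := e^{1/(\lambda\tau)} - 1$ is strictly positive. With this notation, $\rfunc(x,\tau,\lambda) = \tau \log(1 + \alpha x)$, which is just a rescaled logarithm in $x$.

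First I would compute $\frac{\partial}{\partial x} \rfunc(x,\tau,\lambda) = \frac{\tau \alpha}{1 + \alpha x}$. Because $\tau > 0$, $\alpha > 0$, and $1 + \alpha x \geq 1 > 0$ for all $x \in [0,1]$, this derivative is strictly positive on $[0,1]$, establishing the second claim. (As a side benefit, this also shows $\rfunc$ is well defined and real-valued on $[0,1]$, since its argument $1 + \alpha x$ stays strictly positive.)

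Next I would compute the second derivative, $\frac{\partial^2}{\partial x^2} \rfunc(x,\tau,\lambda) = -\frac{\tau \alpha^2}{(1 + \alpha x)^2}$, which is strictly negative for all $x \in [0,1]$ under the same sign considerations. Strict concavity on $[0,1]$ follows immediately.

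There is really no obstacle here beyond bookkeeping: the only thing one must be careful about is the sign of $1 - e^{1/(\lambda\tau)}$, which is negative, so the argument $1 - (1 - e^{1/(\lambda\tau)})x$ of the logarithm is actually greater than or equal to $1$, not less than it. Rewriting this argument as $1 + \alpha x$ with $\alpha > 0$ makes both the domain issue and the signs transparent, and the two derivative computations then deliver the lemma immediately.
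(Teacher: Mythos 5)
Your proof is correct and takes essentially the same approach as the paper: compute the first and second partial derivatives of $\rfunc$ in $x$ and read off their signs. The only cosmetic difference is that you observe $\partial \rfunc / \partial x > 0$ directly from the formula (after the clean substitution $\alpha = e^{1/(\lambda\tau)} - 1 > 0$), whereas the paper evaluates the derivative at $x=1$ and then invokes concavity to propagate the sign to all of $[0,1]$; your route is slightly more direct, but the substance is identical.
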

\begin{proof}
Taking the first and second partial derivatives of Equation~\eqref{eq:r-def} with respect to $x$, we get that,
\begin{equation*}
\frac{\partial}{\partial x} \rfunc(x,\tau,\lambda) =  \frac{-\tau\left(1-e^{\frac{1}{\lambda \tau}} \right)}{1-(1-e^{\frac{1}{\lambda\tau}})x}, \quad
\frac{\partial^2}{\partial x^2} \rfunc(x,\tau,\lambda)= \frac{-\tau (1-e^{\frac{1}{\lambda \tau}})^2}{[1-(1-e^{\frac{1}{\lambda \tau}})x]^2}.
\end{equation*}
These expressions show that, for a given $\tau>0$ and $\lambda>0$, $\frac{\partial^2}{\partial x^2} \rfunc(x,\tau,\lambda) < 0$ for all $x \in \real$. Therefore, $\rfunc$ is strictly concave in $x$. Also, $\frac{\partial}{\partial x} \rfunc(x,\tau,\lambda)|_{x=1} = \tau\left(1-e^{-\frac{1}{\lambda \tau}} \right) > 0$ for all $\tau>0$ and $\lambda>0$. Therefore, by the concavity of $\rfunc$, $\frac{\partial}{\partial x} \rfunc(x,\tau,\lambda) > 0$ for all $x \in [0,1]$.
\end{proof}
For a given $\tau>0$ and $\lambda>0$, define the set of one-task equilibrium server states as:
\begin{equation}
\label{eq:xeq-def}
\xeq(\tau,\lambda):=\setdef{x \in [0,1]}{\servicetimefunc(x)=\rfunc(x,\tau,\lambda)}.
\end{equation}

\begin{remark}
Note that we did not include the constraint $\servicetimefunc(x) \leq 1/\lambda$ in the definition of $\xeq(\tau,\lambda)$ in Equation~\eqref{eq:xeq-def}. This is because this constraint can be shown to be redundant as follows. Equation~\eqref{eq:r-def} and Lemma~\ref{lem:r-concave} imply that, for any $\tau>0$ and $\lambda>0$, $\rfunc(x,\tau,\lambda)$ is strictly increasing in $x$ and hence $\rfunc(x,\tau,\lambda) \leq \rfunc(1,\tau,\lambda)=1/\lambda$ for all $x \in [0,1]$. Therefore, $\servicetimefunc(\xeq(\tau,\lambda)) = \rfunc(\xeq(\tau,\lambda),\tau,\lambda) \leq 1/\lambda$.
\end{remark}

We introduce a couple of more definitions.
For a given $\tau>0$, let
%\vspace{-0.2cm}
\begin{equation}
\label{eq:lambdaeq-def}
\begin{split}
\lambdaeq(\tau):=&\max \setdef{\lambda>0}{\xeq(\tau,\lambda) \neq \emptyset}, \\ \xth(\tau):=&\xeq\left(\tau,\lambdaeq(\tau)\right).
\end{split}
\end{equation}
%\begin{equation}
%\label{eq:xth-def}
%\xth(\tau):=\xeq\left(\tau,\lambdaeq(\tau)\right). 
%\end{equation}

We now argue that the definitions in Equation~\eqref{eq:lambdaeq-def} are well posed. Consider the function $\servicetimefunc(x)-\rfunc(x,\tau,\lambda)$. Since $\rfunc(0,\tau,\lambda)=0$ for any $\tau>0$ and $\lambda>0$, and $\servicetimefunc(0)>0$, we have that $\servicetimefunc(0)-\rfunc(0,\tau,\lambda)>0$ for any $\tau>0$ and $\lambda>0$. Since $\rfunc(1,\tau,\lambda)=1/\lambda$, $\servicetimefunc(1)-\rfunc(1,\tau,\lambda)<0$ for all $\lambda < 1/\smax$. Therefore, by the continuity of $\servicetimefunc(x)-\rfunc(x,\tau,\lambda)$, the set of equilibrium server states, as defined in Equation~\eqref{eq:xeq-def}, is not-empty for all $\lambda<1/\smax$. 
Moreover, since $\rfunc(x,\tau,\lambda) \leq \rfunc(1,\tau,\lambda)=1/\lambda$ for all $x \in [0,1]$, $\servicetimefunc(x)-\rfunc(x,\tau,\lambda) \geq \servicetimefunc(x)-1/\lambda$ for all $x \in [0,1]$. Therefore, for all $\lambda > 1/\smin$, the set of equilibrium states, as defined in Equation~\eqref{eq:xeq-def}, is empty. Hence, $\lambdaeq(\tau)$ and $\xth(\tau)$ are well defined. In general, for a given $\tau>0$ and $\lambda>0$, $\xeq(x,\tau)$ is not a singleton, e.g., see Figure~\ref{fig:sfunc}. However, due to the strict convexity of $\servicetimefunc(x)-\rfunc(x,\tau,\lambda)$ in $x$ as implied by Lemma~\ref{lem:r-concave}, $\xth(\tau)$ contains only one element. In the rest of the paper, $\xth(\tau)$ will denote this single element.

\begin{figure}
\begin{center}

 \includegraphics[width=0.8\linewidth]{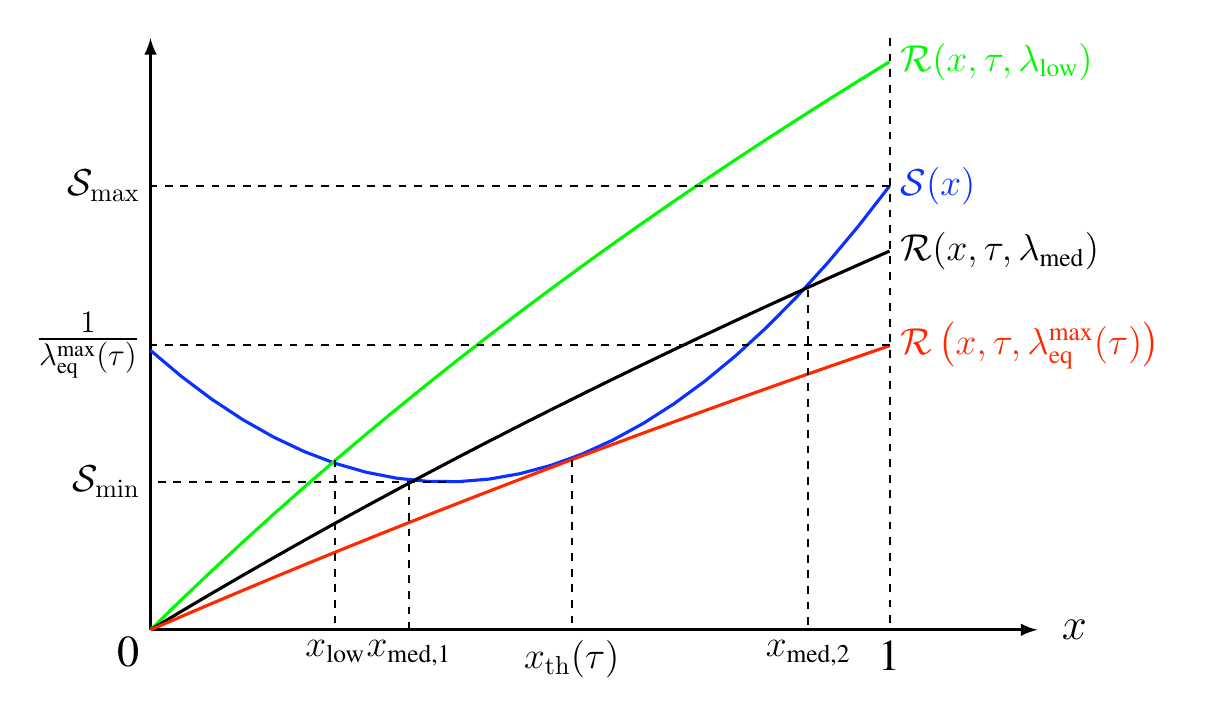}
%\scalebox{0.75}
%{
% \begin{tikzpicture}
%   \draw[-latex, very thick] (0,0) -- (12,0);
%   \draw[-latex, very thick] (0,0) -- (0,8);
%   \draw (12.5,0) node{\LARGE $x$};
%   \draw (-0.3,-0.3) node{\LARGE 0};
%    \draw[dashed, thick] (10,8) -- (10,0) node[anchor = north] {\LARGE 1};
%     \draw[very thick,domain=0:10,blue] plot function {2+(x-4)*(x-4)/9} node[right]{\Large $\servicetimefunc(x)$};
%     \draw[very thick, domain=0:10,green] plot function {16*log(1-(1-exp(0.48))*x/10)} node[right]{\Large $\rfunc(x,\tau,\lambda_{\text{low}})$};
%      \draw[very thick, domain=0:10] plot function {16*log(1-(1-exp(0.32))*x/10)} node[right]{\Large $\rfunc(x,\tau,\lambda_{\text{med}})$};
%      \draw[very thick, domain=0:10,red] plot function {16*log(1-(1-exp(0.24))*x/10)} node[right]{\Large $\rfunc\left(x,\tau,\lambdaeq(\tau)\right)$};
%       \draw[dashed, thick] (4,2) -- (0,2) node[anchor = east] {\Large $\smin$};
%       \draw[dashed, thick] (10,6) -- (0,6) node[anchor = east] {\Large $\smax$};
%        \draw[dashed, thick] (10,3.85) -- (0,3.85) node[anchor = east] {\LARGE $\frac{1}{\lambdaeq(\tau)}$};
%         \draw[dashed, thick] (5.7,2.3) -- (5.7,0) node[anchor = north] {\Large $\xth(\tau)$};
%          \draw[dashed, thick] (3.5,2) -- (3.5,0) node[anchor = north] {\Large $x_{\text{med,1}}$};
%          \draw[dashed, thick] (8.9,4.6) -- (8.9,0) node[anchor = north] {\Large $x_{\text{med,2}}$};
%           \draw[dashed, thick] (2.5,2.3) -- (2.5,0) node[anchor = north] {\Large $x_{\text{low}}$};
%          \end{tikzpicture}  
%}
\end{center}
\caption{A typical $\servicetimefunc(x)$ along with $\rfunc(x,\tau,\lambda)$ for three values of $\lambda$: $\lambda_{\text{low}}$, $\lambda_{\text{med}}$ and $\lambdaeq(\tau)$ in the increasing order. Also, $\xeq\left(\tau,\lambda_{\text{low}}\right)=\{x_{\text{low}}\}$, $\xeq\left(\tau,\lambda_{\text{med}}\right)=\{x_{\text{med,1}},x_{\text{med,2}}\}$ and $\xeq\left(\tau,\lambdaeq(\tau)\right)=\{\xth(\tau)\}$. Note that, since $\xth(\tau)<1$,  $\lambdaeq(\tau)$ is the value of $\lambda$ at which $\rfunc(x,\tau,\lambda)$ is tangent to $\servicetimefunc(x)$.}
\label{fig:sfunc}
\end{figure}

%\margin{make sure this lemma is used; else remove it}
%\begin{lemma}
%\label{lem:xth-bound}
%For any $\tau>0$, $\xth(\tau) > 0$.
%\end{lemma}
%\begin{proof}
%From the definition of $\rfunc$ in Equation~\eqref{eq:r-def}, for any $\tau>0$ and $\lambda>0$, $\rfunc(0,\tau,\lambda)=0$. Since $\servicetimefunc(x)>0$ for all $x \in [0,1]$, $\servicetimefunc(0)-\rfunc(0,\tau,\lambda)$ is not zero for any $\tau>0$ and $\lambda>0$. Therefore, $0 \notin \xeq(\tau,\lambda)$ for any $\tau>0$ and $\lambda>0$ and hence $\xth(\tau) \neq 0$.
%\end{proof}

In the rest of the paper, we will restrict our attention on those $\tau$ and $\servicetimefunc(x)$ for which $\xth(\tau)<1$. Loosely speaking, this is satisfied when $\servicetimefunc(x)$ is increasing on some interval in $[0,1]$ and the increasing part is \emph{steep enough} (e.g., see Figure~\ref{fig:sfunc}). It is reasonable to expect this assumption to be satisfied in the context of human operators whose performance deteriorates quickly at very high utilizations. The implications of the case when $\xth(\tau)=1$ are discussed briefly at appropriate places. 

The following property of $\servicetimefunc(x)$ will be used later on.
\begin{lemma}
For any $\tau>0$, if $\xth(\tau)<1$, then $\frac{d}{dx} \servicetimefunc(x) |_{x=\xth(\tau)}>0$.
\label{lem:S-slope}
\end{lemma}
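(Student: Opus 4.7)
The plan is to recognize $\xth(\tau)$ as an interior minimizer on $[0,1]$ of the strictly convex function
\[
h(x) := \servicetimefunc(x) - \rfunc(x,\tau,\lambdaeq(\tau)),
\]
and then apply the first-order optimality condition together with the positivity supplied by Lemma~\ref{lem:r-concave}.

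First I would note that $h$ is strictly convex on $[0,1]$: $\servicetimefunc$ is convex by the standing assumption, and $-\rfunc(\cdot,\tau,\lambdaeq(\tau))$ is strictly convex by Lemma~\ref{lem:r-concave}. I would also record that $h(0) = \servicetimefunc(0) > 0$, since $\rfunc(0,\tau,\lambda) = 0$, and that $h(\xth(\tau)) = 0$ by the definition of $\xth(\tau)$ through \eqref{eq:xeq-def}.

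Second, I would establish that $h \geq 0$ on all of $[0,1]$. Suppose for contradiction that $h(x_0) < 0$ for some $x_0 \in [0,1]$. Since $\rfunc(x_0,\tau,\lambda)$ is continuous in $\lambda$, there exists $\lambda' > \lambdaeq(\tau)$ with $\servicetimefunc(x_0) - \rfunc(x_0,\tau,\lambda') < 0$; meanwhile $\servicetimefunc(0) - \rfunc(0,\tau,\lambda') = \servicetimefunc(0) > 0$. The intermediate value theorem then produces an element of $\xeq(\tau,\lambda')$, contradicting the maximality of $\lambdaeq(\tau)$ in \eqref{eq:lambdaeq-def}. Combining $h \geq 0$, $h(\xth(\tau)) = 0$, and strict convexity identifies $\xth(\tau)$ as the unique minimizer of $h$ on $[0,1]$; since $h(0)>0$ we have $\xth(\tau) > 0$, and by the hypothesis $\xth(\tau) < 1$, so the minimizer lies in the open interval $(0,1)$.

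Finally, the first-order optimality condition for convex minimization at an interior point yields
\[
\frac{d}{dx}\servicetimefunc(x)\Big|_{x=\xth(\tau)} = \frac{\partial}{\partial x}\rfunc(x,\tau,\lambdaeq(\tau))\Big|_{x=\xth(\tau)},
\]
and Lemma~\ref{lem:r-concave} makes the right-hand side strictly positive, completing the proof. The main obstacle, in my view, is the nonnegativity claim $h \geq 0$ on $[0,1]$: this is the step that actually requires the maximality in the definition of $\lambdaeq(\tau)$, since neither the convexity of $h$ nor the signs of $h(0)$ and $h(1)$ alone preclude $h$ from dipping below zero. The remaining steps are direct consequences of strict convexity and Lemma~\ref{lem:r-concave}.
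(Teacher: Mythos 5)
Your proof is correct and follows essentially the same route as the paper: identify $\xth(\tau)$ as the unique interior minimizer of the strictly convex function $\servicetimefunc(x) - \rfunc(x,\tau,\lambdaeq(\tau))$, apply the first-order optimality condition, and invoke Lemma~\ref{lem:r-concave} to get strict positivity of $\partial \rfunc/\partial x$. You add useful detail where the paper is terse, namely the explicit argument via the intermediate value theorem and the maximality of $\lambdaeq(\tau)$ that $\servicetimefunc - \rfunc(\cdot,\tau,\lambdaeq(\tau)) \geq 0$ on $[0,1]$, which is what actually pins $\xth(\tau)$ as the minimizer.
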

\begin{proof}
The convexity of $\servicetimefunc$ along with strict concavity of $\rfunc$ from Lemma~\ref{lem:r-concave} imply that $\servicetimefunc(x)-\rfunc(x,\tau,\lambda)$ is strictly convex in $x$ for any $\tau>0$ and $\lambda>0$. Therefore, by the definition of $\xth(\tau)$ and $\lambdaeq(\tau)$, if $\xth(\tau)<1$ then $\xth(\tau)$ corresponds to the unique minimum of $\servicetimefunc(x)-\rfunc\left(x,\tau,\lambdaeq(\tau)\right)$. 
Hence, $\frac{\partial}{\partial x} \left(\servicetimefunc(x)-\rfunc\left(x,\tau,\lambdaeq(\tau)\right)\right) |_{x=\xth(\tau)}=0$ for any $\tau>0$. The result follows by combining this with the fact that $\frac{\partial}{\partial x} \rfunc\left(x,\tau,\lambdaeq(\tau)\right)|_{x=\xth(\tau)} > 0$ for any $\tau>0$ from Lemma~\ref{lem:r-concave}.
\end{proof}

\begin{remark}
\label{rem:assumption}
The proof of Lemma~\ref{lem:S-slope} implies that, if $\xth(\tau)<1$ then $\frac{\partial}{\partial x} \left(\servicetimefunc(x)-\rfunc\left(x,\tau,\lambdaeq(\tau)\right)\right)>0$ for all $x \in (\xth(\tau),1]$ and hence $\servicetimefunc(x)-\rfunc\left(x,\tau,\lambdaeq(\tau)\right)>0$ for all $x \in (\xth(\tau),1]$. In particular, $\servicetimefunc(1)>\rfunc\left(1,\tau,\lambdaeq(\tau)\right)=\frac{1}{\lambdaeq(\tau)}$, i.e., 
$\lambdaeq(\tau)$ (which will be proven to be the maximum stabilizable arrival rate) is strictly greater than $1/\servicetimefunc(1)$, which is the rate at which the server is able to service tasks starting with the initial condition $x_0=1$ and servicing tasks continuously thereafter. 
\end{remark}
%\vspace{-2mm}
We next consider a \emph{static} problem and establish results there that will be useful for the dynamic case.
%\vspace{-2mm}
\subsection{The Static Problem}
Consider the following \emph{$n$-task static problem}: Given $n$ tasks, what is the fastest way for the dynamical server to service these tasks starting with an initial state $x$ and ending at final state $x$. We emphasize here that all the $n$ tasks are initially enqueued and no new tasks arrive.
Let $\nofreetime(x,\tau,n,u)$ be the time required by the task release control policy $u \in \mathcal{U}$ for the $n$-task static problem with initial and final server state $x \in [0,1]$. We first provide a result that relates the time for the one-task static problem to $\lambdaeq(\tau)$.

\begin{lemma}
\label{lem:lambda2-expr}
For any $x \in [0,1]$, $\tau>0$ and $u \in \controlset$, we have that
$\nofreetime(x,\tau,1,u) \geq 1/\lambdaeq(\tau)$.
\end{lemma}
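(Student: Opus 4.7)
The plan is to reduce any task release control policy in the one-task static problem to a single scalar choice—the server state $y \in (0, x]$ at which the controller releases the task—compute the resulting total time in closed form, and then identify this time with the reciprocal of the arrival rate for which $y$ is a one-task equilibrium. The bound will then follow immediately from the definition of $\lambdaeq(\tau)$ in \eqref{eq:lambdaeq-def}.

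For the reduction, I would observe that with only one task enqueued and no new arrivals, any $u \in \controlset$ partitions the time horizon into three intervals: an initial idle interval during which $x(t)$ decays monotonically from $x$ to some $y \le x$; the mandatory service interval of length $\servicetimefunc(y)$, during which $x(t)$ rises from $y$ to $y' := 1-(1-y)e^{-\servicetimefunc(y)/\tau}$; and a final idle interval during which $x(t)$ decays from $y'$ back to $x$. (If $u$ never releases the task, then $\nofreetime = +\infty$ and the bound is trivial.) Using \eqref{eq:server-dynamics}, the lengths of the two idle intervals are $\tau\log(x/y)$ and $\tau\log(y'/x)$, so
\[
\nofreetime(x,\tau,1,u) \;=\; \tau\log(x/y) + \servicetimefunc(y) + \tau\log(y'/x) \;=\; \servicetimefunc(y) + \tau\log(y'/y) \;=\; \tau\log\!\left(\frac{y-1+e^{\servicetimefunc(y)/\tau}}{y}\right),
\]
where the $x$-dependent terms cancel after substituting $y' e^{\servicetimefunc(y)/\tau} = e^{\servicetimefunc(y)/\tau} + y - 1$. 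Denote the right-hand side by $T(y)$.

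The key algebraic identity is that $T(y)$ is precisely the period of the one-task equilibrium at $y$: starting from $\servicetimefunc(y) = \rfunc(y,\tau,\lambda)$ in \eqref{eq:r-def} and exponentiating gives $e^{\servicetimefunc(y)/\tau} + y - 1 = y e^{1/(\lambda\tau)}$, which rearranges to $1/\lambda = T(y)$. Hence each $y \in (0,1]$ is the one-task equilibrium for a unique $\lambda(y)$, namely $\lambda(y) = 1/T(y)$. By the definition of $\lambdaeq(\tau)$, we have $\lambda(y) \le \lambdaeq(\tau)$ for every such $y$, so $T(y) \ge 1/\lambdaeq(\tau)$, proving the lemma. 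The main obstacle is the first step: one must justify that the abstract class $\controlset$—which may include non-Markovian or state-dependent controllers—truly collapses to the scalar choice of $y$. With only one task and no ability to preempt service, this is clear because any idle interval monotonically decays the state and the order of pre- vs. post-service idling is forced, but it warrants an explicit argument; the remaining steps are routine algebra and a direct appeal to \eqref{eq:lambdaeq-def}.
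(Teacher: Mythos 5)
Your proposal is correct and takes essentially the same approach as the paper: both reduce an arbitrary $u \in \controlset$ to the scalar choice of release state $y \le x$ (the paper via a geometric rearrangement of idle segments, you via the algebraic observation that the $x$-dependent terms cancel in $T(y)$), then identify $T(y)$ with $1/\lambda(y)$ for the unique $\lambda(y)$ making $y$ a one-task equilibrium, and finish by invoking the definition of $\lambdaeq(\tau)$. Your version is slightly cleaner in that it avoids the paper's intermediate ``max over $x$'' step by directly bounding $\lambda(y)\le\lambdaeq(\tau)$ pointwise, and it correctly flags (and dispatches) the only real gap, namely that the abstract class $\controlset$ collapses to the single parameter $y$ in the one-task static setting.
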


\begin{proof}
First consider the policy $\tilde{u}$ that assigns the task to the server right away. In this case, $\nofreetime(x,\tau,1,\tilde{u})$ is the sum of $\servicetimefunc(x)$ and the idle time to allow the server state to return to $x$. From the definition of one-task equilibrium server states, it follows that $\nofreetime(x,\tau,1,\tilde{u})$ is such that,
%\vspace{-0.2cm} 
\begin{equation}
x \in \xeq\left(\tau,\frac{1}{\nofreetime(x,\tau,1,\tilde{u})}\right).
\label{eq:x-eq}
\end{equation}
In other words, $\nofreetime(x,\tau,1,\tilde{u})$ is the inverse of the arrival rate such that if the server starts at state $x$ with zero queue length, then the server will be able to service a task and get back to state $x$ exactly at the instant when the next task arrives.
Now, consider a policy $u_{x^-}$ that waits for some initial time until the server state reaches state $x^-$ before assigning the task to the server. By definition, $\tilde{u}=u_x$. In this case, also referring to Figure~\ref{fig:one-task-rearrange}, $\nofreetime\left(x,\tau,1,u_{x^-}\right)$ is the sum of initial idle time $t^-$ for the server to reach state $x^-$, the service time $\servicetimefunc(x^-)$ and the idle time $t^+$ for the server state to return to $x$. Note that only those $u_{x^-}$ are feasible for which the server state after service time $\servicetimefunc(x^-)$ is not less than $x$. From the rearrangement argument illustrated in Figure~\ref{fig:one-task-rearrange}, it can be inferred that, for any such $u_{x^-}$, $\nofreetime(x,\tau,1,u_{x^-}) = \nofreetime(x^-,\tau,1,\tilde{u})$.
Therefore,  
%\begin{equation*}
$\max_{x \in [0,1]} \sup_{\text{feasible } u_{x^-}} \frac{1}{\nofreetime\left(x,\tau,1,u_{x^-}\right)} = \max_{x^- \in [0,1]}\frac{1}{\nofreetime(x^-,\tau,1,\tilde{u})}= \max_{x \in [0,1]}\frac{1}{\nofreetime(x,\tau,1,\tilde{u})}$,
%\end{equation*} 
i.e., it is sufficient to consider only the $\tilde{u}$ policy for the lemma. From Equation~\eqref{eq:x-eq}, it follows that,
%\vspace{-0.2cm}
\begin{equation*}
 \max_{x \in [0,1]}\frac{1}{\nofreetime(x,\tau,1,\tilde{u})}=\max \setdef{\lambda>0}{\exists x \in [0,1] \\ \text{ s.t. } x \in \xeq(\tau,\lambda)}.
\end{equation*}
The result follows from Equation~\eqref{eq:lambdaeq-def}. 
%The result follows from the definition of $\lambdaeq(\tau)$ in Equation~\eqref{eq:lambdaeq-def}. 
\begin{figure}[htb!]
\begin{center}

 \includegraphics[width=0.8\linewidth]{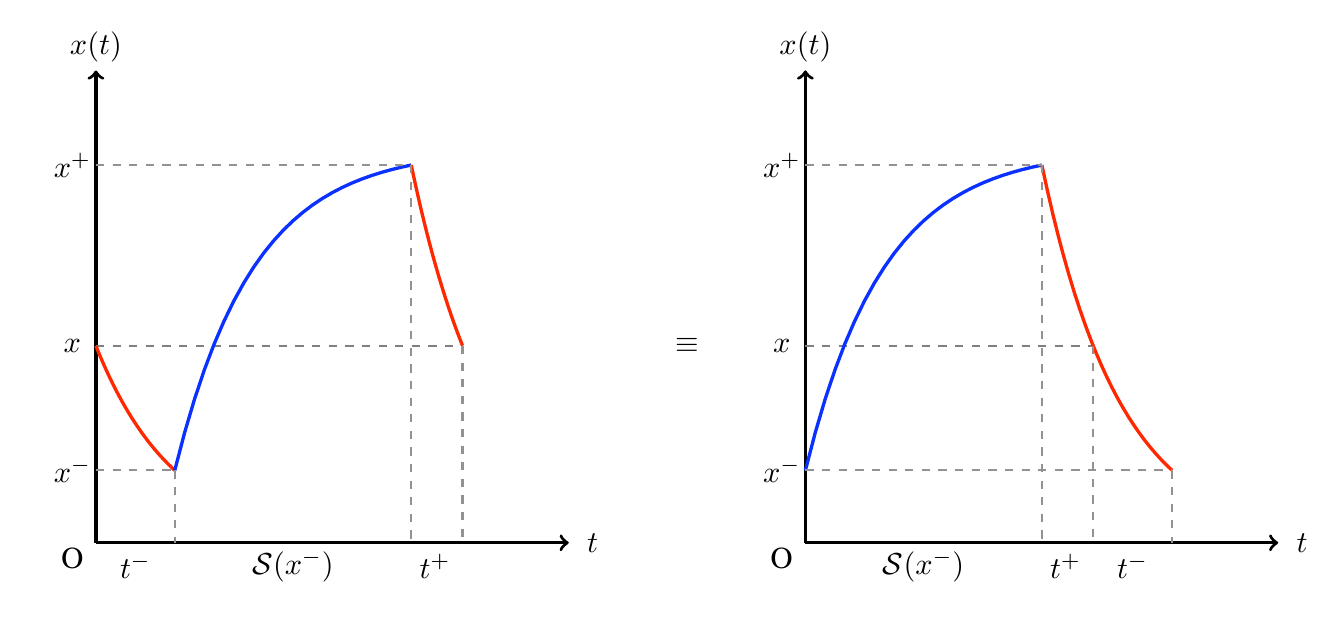}

\end{center}
\caption{Rearranging the time segments during the service cycle of a one-task static problem.}
\label{fig:one-task-rearrange}
\end{figure}
\end{proof}

The following bound on $\nofreetime (x,\tau,n,u)$ will be critical in proving a sharp upper bound on $\lambdamax^*(\tau)$.

\begin{lemma}
\label{prop:constrained-time-bound}
For any $x \in [0,1]$, $\tau>0$, $n \in \natural$ and $u \in \controlset$, we have that
$\nofreetime(x,\tau,n,u) \geq n/\lambdaeq(\tau)$.
\end{lemma}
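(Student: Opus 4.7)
The plan is to bound the completion time of any feasible policy by parameterizing its schedule and then telescoping a key multiplicative estimate on the server state. Let $u \in \controlset$ be any policy completing the $n$-task static problem, and denote by $\alpha_0 \geq 0$ the idle time before the first service begins, by $x_i$ the server state at the instant the $i$-th task begins service (for $i=1,\dots,n$), and by $T_i$ the elapsed time from the start of service of the $i$-th task to that of the $(i+1)$-th task (for $i=1,\dots,n-1$), with $T_n$ denoting the time from the start of service of the $n$-th task until the server state returns to $x$. Set $x_{n+1} := x$, so that $\nofreetime(x,\tau,n,u) = \alpha_0 + \sum_{i=1}^n T_i$.

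First I would integrate Equation~\eqref{eq:server-dynamics} over each interval of length $T_i$ (during which the server is busy for exactly $\servicetimefunc(x_i)$ time units and idle for the remainder) to obtain the recursion $x_{i+1} = (x_i - 1 + e^{\servicetimefunc(x_i)/\tau})\, e^{-T_i/\tau}$, together with the relation $x_1 = x\, e^{-\alpha_0/\tau}$.

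The crux of the argument is the pointwise inequality $\servicetimefunc(x) \geq \rfunc(x,\tau,\lambdaeq(\tau))$ on $[0,1]$. This follows from the definition of $\lambdaeq(\tau)$ as the largest $\lambda$ with $\xeq(\tau,\lambda) \neq \emptyset$, together with the strict convexity of $\servicetimefunc - \rfunc(\cdot,\tau,\lambda)$ in $x$ (from the convexity of $\servicetimefunc$ and Lemma~\ref{lem:r-concave}) and the positivity of this difference at $x=0$. Exponentiating via Equation~\eqref{eq:r-def} then gives $e^{\servicetimefunc(x)/\tau} \geq 1 + (e^{1/(\lambdaeq(\tau)\tau)} - 1)\, x$ for all $x \in [0,1]$. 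Substituting this lower bound into the recursion collapses the nonlinear update into a clean multiplicative one: $x_{i+1} \geq x_i\, e^{(1/\lambdaeq(\tau) - T_i)/\tau}$. Taking logarithms and summing telescopically from $i=1$ to $n$ yields $\log(x_{n+1}/x_1) \geq (n/\lambdaeq(\tau) - \sum_{i=1}^n T_i)/\tau$, and since $\log(x_{n+1}/x_1) = \alpha_0/\tau$ by the boundary relations, rearranging gives $\alpha_0 + \sum_i T_i \geq n/\lambdaeq(\tau)$, which is exactly the desired bound.

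The main obstacles I anticipate are twofold. First, careful bookkeeping in degenerate cases---for instance, if $x=0$, or if the policy never schedules some subsequent task so that $\nofreetime(x,\tau,n,u) = \infty$, in which cases the bound is trivially satisfied but the parameterization $(\alpha_0, T_1, \dots, T_n)$ must be handled so as not to divide by zero when taking logarithms. Second, cleanly justifying the pointwise majorization $\servicetimefunc(\cdot) \geq \rfunc(\cdot,\tau,\lambdaeq(\tau))$ from the variational characterization of $\lambdaeq(\tau)$ as a maximum rather than from a direct pointwise definition; this step relies essentially on the strict convexity/concavity structure assembled in Lemma~\ref{lem:r-concave} and the standing assumption on $\servicetimefunc$.
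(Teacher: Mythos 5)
Your proof is correct, and it takes a genuinely different and cleaner route than the paper's. The paper proves this lemma by induction on $n$: the base case $n=1$ is Lemma~\ref{lem:lambda2-expr}, established via a rearrangement argument (shifting the initial idle segment to after service), and the inductive step requires a case split on whether $x_{k+1}\geq x$ or $x_{k+1}<x$, with a further subdivision in the second case at the index $\tilde{i}$ where the state trajectory last crosses $x$ from above. Your argument sidesteps all of this machinery. The key observation — that $\lambdaeq(\tau)$ being the largest $\lambda$ with $\xeq(\tau,\lambda)\neq\emptyset$, together with convexity of $\servicetimefunc$, strict concavity of $\rfunc$ (Lemma~\ref{lem:r-concave}), and $\servicetimefunc(0)>0=\rfunc(0,\tau,\lambda)$, forces the pointwise majorization $\servicetimefunc(x)\geq\rfunc(x,\tau,\lambdaeq(\tau))$ on all of $[0,1]$ — is not stated explicitly anywhere in the paper, but it is exactly what makes the one-step update collapse: substituting $e^{\servicetimefunc(x_i)/\tau}\geq 1+(e^{1/(\lambdaeq\tau)}-1)x_i$ into the recursion $x_{i+1}=(x_i-1+e^{\servicetimefunc(x_i)/\tau})e^{-T_i/\tau}$ yields the purely multiplicative bound $x_{i+1}\geq x_i\,e^{(1/\lambdaeq(\tau)-T_i)/\tau}$, and logarithmic telescoping finishes the job using only the boundary relations $x_1=xe^{-\alpha_0/\tau}$ and $x_{n+1}=x$. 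This also renders Lemma~\ref{lem:lambda2-expr} a special case ($n=1$, $\alpha_0=0$) rather than a separate building block. The degenerate cases you flag (infinite completion time, $x=0$) are indeed benign since the bound is vacuous there, and $x>0$ propagates $x_i>0$ for all $i$ so the logarithms are defined. One place worth tightening if you were to write this out in full is the justification of the pointwise majorization: strict convexity of $\servicetimefunc-\rfunc(\cdot,\tau,\lambda)$ and positivity at $0$ alone do not immediately yield $\servicetimefunc\geq\rfunc(\cdot,\tau,\lambdaeq)$; you also need that $\rfunc$ is strictly decreasing in $\lambda$ for $x>0$, so that a strict violation $\servicetimefunc(x^*)<\rfunc(x^*,\tau,\lambdaeq(\tau))$ would persist for $\lambda$ slightly above $\lambdaeq(\tau)$ and, via the intermediate value theorem against positivity at $x=0$, produce a zero of $\servicetimefunc-\rfunc(\cdot,\tau,\lambda)$, contradicting maximality. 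That step is straightforward but should be made explicit. A further nicety of your argument is that it does not appear to use the standing assumption $\xth(\tau)<1$, whereas that assumption does enter other parts of the paper's Section~\ref{sec:upper-bound}.
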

\begin{proof}
For a given $x \in [0,1]$, $\tau>0$ and $u \in \controlset$, we prove the result by induction on $n$. The statement holds true for $n=1$ by Lemma~\ref{lem:lambda2-expr}. Assume that the result holds true for some $n=k$, i.e., $\nofreetime(x,\tau,k,u) \geq k/ \lambdaeq(\tau)$.
 Given this, we shall prove that the statement holds true for $n=k+1$. 
 
Without any loss of generality, assume that $u$ does not let the server idle before assigning the first task. This is because if $u$ lets the server idle initially until it reaches a state, say $\tilde{x}<x$, then one can alternately consider a modified $(k+1)$-task problem with initial and final server state $\tilde{x}$, and a modified control policy $u_{\text{mod}}$ that does not idle the server before assigning the first task and under which the server states at the beginning of the service of tasks are the same as those under $u$.
By following an argument similar to the one illustrated in Figure~\ref{fig:one-task-rearrange}, one can then see that the time for this modified $(k+1)$-task problem, $\nofreetime(\tilde{x},\tau,k+1,u_{\text{mod}})$, is the same as $\nofreetime(x,\tau,k+1,u)$.

For $i \in \until{k+1}$, let $x_i$ and $x_i'$ denote the server states at the beginning and at the end of service of task $i$ respectively under the policy $u$. As argued before, we assume without loss of generality that $u$ is such that $x_1=x$, and hence,
%\vspace{-2.5mm}
\begin{equation}
x_1' > x.
\label{eq:x_1-ineq}
\end{equation}
For $i \in \until{k+1}$, let $t_+(x_i,x_i')=\servicetimefunc(x_i)$ denote the time required for the server state to go from $x_i$ to $x_i'$ and for $i \in \until{k}$, let $t_-(x_i',x_{i+1}) = \tau (\log x_i' - \log x_{i+1})$ denote the time required by the server to idle from $x_i'$ to $x_{i+1}$. With these notations, one can write that,
%\vspace{-0.2cm}
 \begin{equation}
 \label{eq:split}
 %\begin{split}
\nofreetime(x,\tau,k+1,u) =  \sum_{i=1}^{k} t_+(x_i,x_i') + \sum_{i=1}^{k} t_-(x_i',x_{i+1}) + t_+(x_{k+1},x_{k+1}')+ t_-(x_{k+1}',x).
 %\end{split}
 \end{equation}
The rest of the proof is split among the following two cases.\\
Case 1: $x_{k+1} \geq x$. We write $t_-(x_{k+1}',x)$ as
\begin{equation}
%t_-(x_{k+1}',x) = \tau (\log x_{k+1}' - \log x_{k+1})+ \tau (\log x_{k+1} - \log x).
t_-(x_{k+1}',x) = \tau \log (x_{k+1}'/  x_{k+1})+ \tau \log (x_{k+1} / x).
\label{eq:t-last}
\end{equation}
Therefore, from Equations~\eqref{eq:split} and \eqref{eq:t-last}, we get that,
%\vspace{-0.2cm}
\begin{align*}
\nofreetime(x,\tau,k+1,u)
    = &  \sum_{i=1}^{k} t_+(x_i,x_i') + \sum_{i=1}^{k-1} t_-(x_i',x_{i+1}) 
 +  t_+(x_{k+1},x_{k+1}') +  \tau \log x_{k+1}' \\ & - \tau \log x_{k+1})   + t_-(x_{k}',x_{k+1})  +  \tau (\log x_{k+1} - \log x) 
\end{align*}
Since $ t_-(x_{k}',x_{k+1}) +  \tau (\log x_{k+1} - \log x) = \tau (\log x_k' - \log x_{k+1})+ \tau (\log x_{k+1} - \log x)=\tau(\log x_k' - \log x)=t_-(x_k',x)$, we can write that,
%\vspace{-0.2cm}
\begin{equation}
\label{eq:Tf-intermediate}
\begin{split}
\nofreetime(x,\tau,k+1,u)
    = & \sum_{i=1}^{k} t_+(x_i,x_i') + \sum_{i=1}^{k-1} t_-(x_i',x_{i+1}) 
 +  t_+(x_{k+1},x_{k+1}') \\ & +  t_-(x_{k+1}',x_{k+1})  + t_-(x_k',x).
\end{split}
\end{equation}
Now, consider the $k$-task static problem with initial and final server state $x$. Let $u'$ denote the control policy under which the server states at the beginning of the tasks are $x_1,x_2,\ldots,x_k$. Also, $t_+(x_{k+1},x_{k+1}') +  t_-(x_{k+1}',x_{k+1})=\nofreetime(x_{k+1},\tau,1,\tilde{u})$, where $\tilde{u}$, as in the proof of Lemma~\ref{lem:lambda2-expr}, is the control policy for the one-task static problem that assigns the task to the server without any initial idling time. Combining these with Equation~\eqref{eq:Tf-intermediate}, one gets that,
$\nofreetime(x,\tau,k+1,u) = \nofreetime(x,\tau,k,u') + \nofreetime(x_{k+1},\tau,1,\tilde{u})$.
We have $\nofreetime(x,\tau,k,u') \geq k/\lambdaeq(\tau)$ by the induction argument and $ \nofreetime(x_{k+1},\tau,1,\tilde{u}) \geq 1/\lambdaeq(\tau)$ by Lemma~\ref{lem:lambda2-expr}. Therefore, $\nofreetime(x,\tau,k+1,u) \geq (k+1)/\lambdaeq(\tau)$. \\
Case 2: $x_{k+1} < x$. Let $\tilde{i}:=\max \setdef{i \in \until{k}}{x_i' > x \quad \& \quad x_{i+1} < x}$. Since $x_1' > x$ by Equation~\eqref{eq:x_1-ineq}, $\tilde{i}$ is well defined. Therefore,
%\vspace{-0.2cm}
\begin{equation}
\begin{split}
\nofreetime(x,\tau,k+1,u)= & \sum_{i=1}^{\tilde{i}} t_+(x_i,x_i') + \sum_{i=1}^{\tilde{i}-1} t_-(x_i',x_{i+1})  + t_-(x_{\tilde{i}}',x_{\tilde{i}+1}) 
 \\ & + \sum_{i=\tilde{i}+1}^{k+1} t_+(x_i,x_i')   +  \sum_{i=\tilde{i}+1}^{k} t_-(x_i',x_{i+1}) + t_-(x_{k+1}',x).
%\nofreetime(x,\tau,k+1,u)=&\sum_{i=1}^{\tilde{i}} t_+(x_i,x_i') + \sum_{i=1}^{\tilde{i}-1} t_-(x_i',x_{i+1}) \\ & + t_-(x_{\tilde{i}}',x_{\tilde{i}+1}) 
% + \sum_{i=\tilde{i}+1}^{k+1} t_+(x_i,x_i') \\ & +  \sum_{i=\tilde{i}+1}^{k} t_-(x_i',x_{i+1}) + t_-(x_{k+1}',x).
\end{split}
\label{eq:Tf-case2}
\end{equation}
Splitting $t_-(x_{\tilde{i}}',x_{\tilde{i}+1})$ as $\tau(\log x_{\tilde{i}}'- \log x)+\tau(\log x - \log x_{\tilde{i}+1})$, Equation~\eqref{eq:Tf-case2} can be written as
%\vspace{-0.1cm}
\begin{equation}
\nofreetime(x,\tau,k+1,u)=\nofreetime(x,\tau,\tilde{i},u_1)+\nofreetime(x,\tau,k+1-\tilde{i},u_2),
\label{eq:Tf-case2-final}
\end{equation}
where $u_1$ is the control policy for the $\tilde{i}$-task static problem with initial and final server state $x$, such that the server states at the beginning of the service of tasks are $x_1,\ldots,x_{\tilde{i}}$, and $u_2$ is the control policy for the $k+1-\tilde{i}$-task static problem with initial and final server state $x$ such that the server states at the beginning of the service of the tasks are $x_{\tilde{i}+1},\ldots,x_{k+1}$. Since both $\tilde{i}$ and $k+1-\tilde{i}$ are strictly less than $k+1$, we apply induction argument to both the terms on the right side of Equation~\eqref{eq:Tf-case2-final} to conclude that
$\nofreetime(x,\tau,k+1,u) \geq \frac{\tilde{i}}{\lambdaeq(\tau)} + \frac{k+1-\tilde{i}}{\lambdaeq(\tau)} = \frac{k+1}{\lambdaeq(\tau)}$.
\end{proof}

\subsection{Upper Bound on Stabilizable Arrival Rate}
We now return to the dynamic problem, where we prove an upper bound on $\lambdamax^*(\tau)$. Trivially, $\lambdamax^*(\tau) \leq \frac{1}{\smin}$. We next establish a sharper upper bound. First, we state a useful lemma.

\begin{lemma}
For any $\tau>0$, $x_0 \in [0,1]$, $n_0 \in \natural$ and $\lambda > \lambdaeq(\tau)$, if $\xth(\tau) < 1$ then there exist constants $x_L(\tau)$ and $x_U(\tau)$ satisfying $0<x_L(\tau)<x_U(\tau)<1$ such that for any $u \in \controlset$ under which the server states at the beginning of tasks do not lie in $[x_L(\tau),x_U(\tau)]$ infinitely often, we have that
$\limsup_{t \to + \infty} n_u(t,\tau,\lambda,x_0,n_0) = +\infty$.
\label{lem:x-l-u}
\end{lemma}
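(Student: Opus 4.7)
The plan is to use the fact that among all one-task cycle times $\freetime(x)$---the time for the server starting at $x$ to service one task and idle back to $x$---the value $\freetime(\xth(\tau))=1/\lambdaeq(\tau)$ is uniquely minimal. Hence, if task-start states are confined outside a neighborhood of $\xth(\tau)$, the server's amortized throughput falls strictly below $\lambdaeq(\tau)<\lambda$ and the backlog must grow without bound.

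First, I would introduce
\[
\freetime(x) \;:=\; \servicetimefunc(x) + \tau\log\frac{1-(1-x)e^{-\servicetimefunc(x)/\tau}}{x},\qquad x\in(0,1],
\]
extended by $\freetime(0):=+\infty$; by direct integration of \eqref{eq:server-dynamics} this coincides with $\nofreetime(x,\tau,1,\tilde u)$ for the ``assign right away'' policy $\tilde u$ used in the proof of Lemma~\ref{lem:lambda2-expr}. Using the equilibrium characterization \eqref{eq:xeq-def}, $\freetime(x)=1/\lambda^*(x)$, where $\lambda^*(x)$ is the unique positive solution of $\servicetimefunc(x)=\rfunc(x,\tau,\lambda^*(x))$. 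The definition \eqref{eq:lambdaeq-def} forces $\lambda^*(x)\le\lambdaeq(\tau)$, and the strict convexity of $\servicetimefunc-\rfunc(\cdot,\tau,\lambdaeq(\tau))$ (convexity of $\servicetimefunc$ plus strict concavity from Lemma~\ref{lem:r-concave}), together with the standing hypothesis $\xth(\tau)<1$, yields equality only at $x=\xth(\tau)$. Thus $\freetime$ is continuous on $(0,1]$, tends to $+\infty$ as $x\to 0^+$, and is uniquely minimized at $\xth(\tau)$ with value $1/\lambdaeq(\tau)$.

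I would then fix any $x_L(\tau),x_U(\tau)$ with $0<x_L(\tau)<\xth(\tau)<x_U(\tau)<1$. Since $\freetime$ is continuous on the compact set $[0,1]\setminus(x_L(\tau),x_U(\tau))$ and the unique global minimizer $\xth(\tau)$ lies in the excluded open interval, its minimum there equals $1/\tilde\lambda(\tau)$ for some $\tilde\lambda(\tau)<\lambdaeq(\tau)$. Now take any $u\in\controlset$ under which $x_i\in[x_L(\tau),x_U(\tau)]$ only for finitely many $i$; without loss of generality $u$ services infinitely many tasks (otherwise arrivals alone force $n_u\to+\infty$). Let $a_i$ denote the start time of task $i$, $x_i,x_i'$ the server states at the start and end of task $i$, and $\iota_i\ge 0$ the idle duration after task $i$. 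Integrating \eqref{eq:server-dynamics} during the idle phase gives $\iota_i=\tau\log(x_i'/x_{i+1})$; rewriting each summand of $a_{N+n}-a_N=\sum_{i=N}^{N+n-1}(\servicetimefunc(x_i)+\iota_i)$ as $\freetime(x_i)+\tau(\log x_i-\log x_{i+1})$ and telescoping produces
\[
a_{N+n}-a_N \;=\; \sum_{i=N}^{N+n-1}\freetime(x_i) + \tau\log\frac{x_N}{x_{N+n}}.
\]
Choosing $N$ large enough that $x_i\notin[x_L(\tau),x_U(\tau)]$ for every $i\ge N$, and noting $x_{N+n}\le 1$, this is at least $n/\tilde\lambda(\tau)+\tau\log x_N$.

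Finally, at least $\lambda(a_{N+n}-a_N)-1$ new tasks arrive in $[a_N,a_{N+n}]$ while exactly $n$ are completed, giving
\[
n_u(a_{N+n},\tau,\lambda,x_0,n_0) \;\ge\; n_u(a_N,\tau,\lambda,x_0,n_0) + n\bigl(\lambda/\tilde\lambda(\tau)-1\bigr) + \lambda\tau\log x_N - 1.
\]
Since $\lambda>\lambdaeq(\tau)>\tilde\lambda(\tau)$, the coefficient of $n$ is strictly positive and $a_{N+n}\to+\infty$, so $\limsup_{t\to+\infty}n_u(t,\tau,\lambda,x_0,n_0)=+\infty$. The principal obstacle is the unique-minimum property of $\freetime$ at $\xth(\tau)$: it combines \eqref{eq:xeq-def}--\eqref{eq:lambdaeq-def} with the strict convexity of $\servicetimefunc-\rfunc(\cdot,\tau,\lambdaeq(\tau))$ from Lemma~\ref{lem:r-concave}, and crucially uses the standing assumption $\xth(\tau)<1$ to preclude a boundary minimum at $x=1$.
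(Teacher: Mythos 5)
Your proof is correct, and it takes a genuinely different route from the paper's. The paper constructs \emph{explicit} thresholds $x_L(\tau),x_U(\tau)$ out of hand-tuned auxiliary quantities ($\xmin=1-e^{-\smin/\tau}$, the largest root $x_{u_1}$ of $\servicetimefunc(x)=1/\lambdaeq(\tau)$, and two points $x_{l_2},x_{u_2}$ designed to bound transit times), and then verifies by a four-way case analysis on $(x_i,x_{i+1})$ that each inter-start gap is bounded below by a constant exceeding $1/\lambdaeq(\tau)$. You instead isolate the conceptual core: the one-task cycle time $\freetime(x)=\nofreetime(x,\tau,1,\tilde u)$ equals $1/\lambda^*(x)$ where $\lambda^*(x)$ is the unique arrival rate with $x\in\xeq(\tau,\lambda^*(x))$, and strict convexity of $\servicetimefunc-\rfunc(\cdot,\tau,\lambdaeq(\tau))$ (Lemma~\ref{lem:r-concave}) plus $\xth(\tau)<1$ makes $\xth(\tau)$ the unique global minimizer of $\freetime$. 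A compactness argument then turns ``minimizer excluded'' into a \emph{uniform} gap $1/\tilde\lambda(\tau)>1/\lambdaeq(\tau)$, and a clean telescoping identity $a_{N+n}-a_N=\sum_{i=N}^{N+n-1}\freetime(x_i)+\tau\log(x_N/x_{N+n})$ finishes it. Your version buys several things: it works for \emph{any} $x_L<\xth(\tau)<x_U$ rather than the paper's specific choice; it makes transparent exactly why $\xth(\tau)$ is the only state that must be visited (unique minimizer of the cycle time); and the telescoping identity quietly subsumes the paper's four cases and also mirrors the rearrangement argument the paper uses elsewhere (Lemma~\ref{lem:lambda2-expr} and Theorem~\ref{thm:lambda-upperbound}), suggesting the two lemmas could be unified. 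The paper's version buys explicit, policy-independent constants, which can be read off directly and might be useful for quantitative bounds, and avoids invoking lower semicontinuity at $x=0$. Two minor points you should make explicit to be airtight: $x_i>0$ for all $i\ge 2$ (so the $\tau\log x_N$ constant is finite once $N\ge 2$), and the ``finitely many services'' degenerate case includes the situation where the server is assigned no tasks at all from some time on, which trivially gives unbounded queue since arrivals persist.
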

\begin{proof}
We first define the constants $x_L(\tau)$ and $x_U(\tau)$. For a given $\tau>0$, let $\xmin:=1-e^{-\smin/\tau}$ denote a lower bound on the lowest possible server state immediately after the service of a task.
Note that, for any $\tau>0$ and $\smin>0$, $\xmin>0$.
For a given $\tau>0$, define a map $\map{g}{[0,1]}{\real \union \{+\infty\}}$ as:
%\vspace{-0.2cm}
\begin{equation}
\label{eq:g-def}
g(x)=\smin+\tau \log (\xmin / x).
\end{equation}
Note that $g$ is continuous, strictly decreasing with respect to $x$, and that $g(0)=+\infty$. Therefore, by continuity argument, there exists a $\tilde{x}>0$ such that $g(x) > 1/\lambdaeq(\tau)$ for all $x \in [0,\tilde{x})$. Define $x_{l_1}(\tau):=\min\{\xmin,\tilde{x}\}$. It follows from the previous arguments that $x_{l_1}(\tau)>0$. Define the following quantities
\begin{equation}
\label{eq:xu-def}
\begin{split}
x_{u_1}:= & \max \setdef{x \in [0,1]}{\servicetimefunc(x)=1/\lambdaeq(\tau)}, \\ x_{u_2}:= & 1-(1-x_{l_1} )e^{-\frac{2}{\tau \lambdaeq(\tau)}}, \quad x_{l_2}:=x_{u_2} e^{-\frac{2}{\tau \lambdaeq(\tau)}},
\end{split}
\end{equation}
\begin{equation*}
x_L(\tau):=\min\{x_{l_1},x_{l_2}\}, \quad x_U(\tau):=\max\left\{\frac{1+x_{u_1}}{2},x_{u_2}\right\},
\end{equation*}
where we have dropped the dependency of $x_{l_1}$, $x_{l1_2}$, $x_{u_1}$ and $x_{u_2}$ on $\tau$ for the sake of conciseness.
We now infer relevant properties of the various quantities defined in Equation~\eqref{eq:xu-def}. Remark~\ref{rem:assumption} implies that, if $\xth(\tau)<1$, then $\servicetimefunc(1)>1/\lambdaeq(\tau)$. This, combined with the fact that 
$\smin \leq 1/\lambdaeq(\tau)$ and that $\servicetimefunc(x)$ is continuous, implies that $x_{u_1}$ is well-defined and $x_{u_1}<1$.
From the definition of $x_{u_2}$, we have that $x_{u_2}<1$ and also $x_{u_2}>x_{l_1}$ since $e^{-\frac{2}{\tau \lambdaeq(\tau)}}<1$.
This, combined with earlier conclusion after Equation~\eqref{eq:g-def} that $x_{l_1}>0$, implies that $x_{u_2}>0$. Therefore, from the definition of $x_{l_2}$, we have that, $x_{l_2}>0$ and $x_{l_2}<x_{u_2}$. Combining these facts with the definitions of $x_L(\tau)$ and $x_U(\tau)$, we have that, $0<x_L(\tau)<x_U(\tau)<1$.

%Having defined the constants $x_L(\tau)$ and $x_U(\tau)$, we now prove the statement of the lemma. 
For the rest of the proof, we drop the dependency of $x_L$ and $x_U$ on $\tau$. Consider a $u$ such that the maximum task index for which the server state lies in $[x_L,x_U]$ is finite, say $I$. Let $x_i$ and $x_i'$ be the server states at the beginning of service of task $i$ and the end of service of task $i$ respectively. Consider a service cycle of a typical task for $i>I$. We now consider four cases depending on where $x_i$ and $x_{i+1}$ belong, and in each case we show that the time between the beginning of successive tasks is strictly greater than $1/\lambdaeq(\tau)$, thereby establishing the lemma.
\begin{itemize}
\item $x_i \in [0,x_L)$ and $x_{i+1} \in [0,x_L)$:
The service time for task $i$, $\servicetimefunc(x_i)$, is lower bounded by $\smin$. By the definition of $\xmin$, $x_i' \geq \xmin$ and hence $x_i' \geq x_L$. Since $x_{i+1}$ is less than $x_L$, the server has to idle for time $\tau \log (x_i' /x_{i+1})$, which is lower bounded by $\tau \log( \xmin/ x_L)$. In summary, the total time between the service of successive tasks is lower bounded by $\smin+\tau \log (\xmin / x_L)$, which is equal to $g(x_L)$ from Equation~\eqref{eq:g-def}. By the choice of $x_L$, $g(x_L)$ is strictly greater than $1/\lambdaeq(\tau)$. 

\item $x_i \in (x_U,1]$ and $x_{i+1} \in (x_U,1]$:
The convexity of $\servicetimefunc(x)$ along with Lemma~\ref{lem:S-slope} imply that $\servicetimefunc(x) > 1/\lambdaeq(\tau)$ for all $x \in (x_{u_1},1]$. Since $x_U>x_{u_1}$ from Equation~\eqref{eq:xu-def}, we have that $\servicetimefunc(x_i) > 1/\lambdaeq(\tau)$. Therefore, the time spent between successive tasks is lower bounded by $1/\lambdaeq(\tau)$.

\item $x_i \in [0,x_L)$ and $x_{i+1} \in (x_U,1]$:
The fact that it takes at least $2/\lambdaeq(\tau)$ amount of service time on task $i$ for the server to go from from $x_i$ to $x_{i+1}$ follows from the definition of $x_{l_1}$ and $x_{u_2}$ and their relation to $x_L$ and $x_U$ respectively, as stated in Equation~\eqref{eq:xu-def}. Therefore, the time spent between successive tasks is at least $2/\lambdaeq(\tau)$.

\item $x_i \in (x_U,1]$ and $x_{i+1} \in [0,x_L)$:
The fact that it takes at least $2/\lambdaeq(\tau)$ time for the server to idle from $x_i'$ to $x_{i+1}$ follows from the definition of $x_{l_2}$ and $x_{u_2}$ and their relation to $x_L$ and $x_U$ respectively, as stated in Equation~\eqref{eq:xu-def}. Therefore, the time spent between successive tasks is at least $2/\lambdaeq(\tau)$.
\end{itemize}
%\vspace{-2mm}
\end{proof}

%\vspace{-2mm}
\begin{theorem}
\label{thm:lambda-upperbound}
For any $\tau>0$, $x_0 \in [0,1]$, $n_0 \in \natural$, $\lambda > \lambdaeq(\tau)$ and $u \in \controlset$, if $\xth(\tau)<1$ then we have that 
$\limsup_{t \to +\infty} n_u(t,\tau,\lambda,x_0,n_0) = + \infty$.
\end{theorem}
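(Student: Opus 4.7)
The plan is to argue by contradiction: suppose some $u \in \controlset$ stabilizes the queue at arrival rate $\lambda > \lambdaeq(\tau)$, so that $n_u(t,\tau,\lambda,x_0,n_0) \leq K$ for all $t \geq 0$ and some constant $K$. First I invoke the contrapositive of Lemma~\ref{lem:x-l-u}: the sequence $\{x_i\}_{i \in \natural}$ of server states at the beginnings of service must contain infinitely many terms in $[x_L(\tau),x_U(\tau)]$. Using the standard fact that every real sequence admits a monotone subsequence, I then extract from these a non-decreasing subsequence $\{x_{\ell_j}\}_{j \in \natural} \subset [x_L(\tau),x_U(\tau)]$, so that $x_{\ell_1} \leq x_{\ell_j}$ for every $j \geq 1$.

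Next, for each $j \geq 2$ I consider the dynamic trajectory on the interval $[t_{\ell_1},t_{\ell_j}]$, where $t_i$ denotes the start-of-service time of the $i$-th task. During this interval the server begins in state $x_{\ell_1}$, ends in state $x_{\ell_j} \geq x_{\ell_1}$, and services exactly $N_j := \ell_j-\ell_1$ tasks. Appending an idle segment of duration $\tau \log(x_{\ell_j}/x_{\ell_1})$ returns the server state to $x_{\ell_1}$, producing a feasible control policy for the $N_j$-task static problem with initial and final state $x_{\ell_1}$. Lemma~\ref{prop:constrained-time-bound} together with $x_{\ell_j}/x_{\ell_1} \leq x_U(\tau)/x_L(\tau)$ then yields
\begin{equation*}
t_{\ell_j}-t_{\ell_1} \geq \frac{\ell_j-\ell_1}{\lambdaeq(\tau)} - \tau \log\left(\frac{x_U(\tau)}{x_L(\tau)}\right).
\end{equation*}

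Finally, stability forces $\ell_j - 1 \geq n_0 + \lfloor \lambda t_{\ell_j} \rfloor - K$, equivalently $t_{\ell_j} \leq \ell_j/\lambda + C$ for a constant $C$ depending only on $K$, $n_0$, $\lambda$. Substituting into the preceding bound and rearranging gives $\ell_j \left(1/\lambdaeq(\tau) - 1/\lambda \right) \leq C'$ for another constant $C'$. Since $\lambda > \lambdaeq(\tau)$ the coefficient on the left is strictly positive, so $\ell_j$ is bounded above, contradicting $\ell_j \geq j \to \infty$.

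The main obstacle is the reduction to Lemma~\ref{prop:constrained-time-bound}: that lemma requires the initial and final server states to coincide, whereas in the dynamic evolution they generally differ, and appending an idle segment can only \emph{decrease} the server state. The monotone-subsequence device resolves this, because along a non-decreasing subsequence any long block from $\ell_1$ to $\ell_j$ ends at or above where it started, so a single idle correction of magnitude at most $\tau \log(x_U(\tau)/x_L(\tau))$ suffices to close the cycle; crucially, this correction is applied once for the entire block of $N_j$ tasks rather than once per pair of consecutive good tasks, so it does not accumulate linearly in $N_j$ and thus does not swamp the $N_j/\lambdaeq(\tau)$ main term.
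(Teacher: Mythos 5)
Your overall strategy coincides with the paper's: restrict via Lemma~\ref{lem:x-l-u} to the indices where the server state at start-of-service lies in $[x_L(\tau),x_U(\tau)]$, reduce a block of the dynamic trajectory between two such ``good'' indices to a static problem with matching endpoints, invoke Lemma~\ref{prop:constrained-time-bound}, and derive a linear-in-index lower bound on elapsed time that is incompatible with stability when $\lambda>\lambdaeq(\tau)$. The closing arithmetic (translating bounded queue length into $t_{\ell_j}\leq \ell_j/\lambda+C$ and deriving a bound on $\ell_j$) is correct.

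However, there is a genuine gap in the reduction step. You assert that from the infinitely many good states you can ``extract a non-decreasing subsequence $\{x_{\ell_j}\}$, so that $x_{\ell_1}\leq x_{\ell_j}$ for every $j$.'' This is not always possible. The monotone subsequence theorem only gives a subsequence that is monotone in \emph{some} direction; if the good states happen to form, say, a strictly decreasing sequence in $[x_L(\tau),x_U(\tau)]$ (which the policy $u$ is free to produce), then \emph{every} infinite subsequence is strictly decreasing and there is no index $\ell_1$ with $x_{\ell_1}\leq x_{\ell_j}$ for all later $\ell_j$. In that situation the state at the end of the block is \emph{below} the state at the beginning, and your correction device --- appending an idle segment --- cannot raise the state back to $x_{\ell_1}$; idling only lowers it. So the reduction to Lemma~\ref{prop:constrained-time-bound} breaks precisely in the case you hoped the monotone-subsequence trick would rule out.

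The paper confronts this head-on by treating both signs of $x_{i_k}-x_{i_1}$. When $x_{i_k}\geq x_{i_1}$, it appends idle time exactly as you do. When $x_{i_k}<x_{i_1}$, it instead appends $m$ additional \emph{service} tasks (run back-to-back) so as to raise the state past $x_{i_1}$, then idles down; the extra time is uniformly bounded by $\smax + \const_1 + \const_2$ because the endpoints live in $[x_L(\tau),x_U(\tau)]$, and Lemma~\ref{prop:constrained-time-bound} is then applied with $m+i_k-i_1$ tasks, which only \emph{strengthens} the lower bound. To repair your argument you would need to add this second branch (or an equivalent device); the rest of your proof then goes through unchanged.
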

\begin{proof}
Lemma~\ref{lem:x-l-u} implies that there exist $x_L>0$ and $x_U<1$ such that it suffices to consider set of task release control policies under which the server states at the beginning of service of tasks lie in $[x_L,x_U]$ infinitely often. Consider one such control policy and let the sequence of indices of tasks for which the server state at the beginning of their service belongs to $[x_L,x_U]$ be denoted as $i_1, i_2,\ldots$. Let $x_{i}$ and $t_{i}$ be the server state and the time respectively at the beginning of the service of the $i$-th task. We have that $x_{i_k} \in [x_L,x_U]$ for all $k \geq 1$.
Define constants $\const_1$ and $\const_2$ as follows:
%\vspace{-0.3cm}
\begin{equation}
\const_1:= - \tau \log x_L, \quad \const_2:=-\tau \log(1-x_U).
\label{eq:kappa-def}
\end{equation}

Note that both $\const_1$ and $\const_2$ are positive.
For each $k>1$, we now relate $t_{i_k}-t_{i_1}$ to the time for a related static problem. If $x_{i_k} \geq x_{i_1}$, then consider the $(i_k-i_1)$-task static problem with initial and final server state $x_{i_1}$. Then, for a control policy $u'$ for this static problem under which the server states are $x_{i_1}, x_{i_1+1}, \ldots,x_{i_k}$, we have $\nofreetime(x_{i_1},\tau,i_k-i_1,u')=t_{i_k}-t_{i_1}+\tau(\log x_{i_k}- \log x_{i_1})$. 
Therefore, $t_{i_k}-t_{i_1} = \nofreetime(x_{i_1},\tau,i_k-i_1,u') - \tau(\log x_{i_k}- \log x_{i_1}) \geq \nofreetime(x_{i_1},\tau,i_k-i_1,u') +\tau \log x_{i_1} \geq \nofreetime(x_{i_1},\tau,i_k-i_1,u') - \const_1$, where the last inequality follows from Equation~\eqref{eq:kappa-def}.
If $x_{i_k}<x_{i_1}$, then consider the $(i_k-i_1+m)$-task static problem with initial and final server state $x_{i_1}$ and a control policy $u''$ for this static problem such that: the server states at the beginning of the service of first $i_k-i_1$ tasks are $x_{i_1}, x_{i_1+1}, \ldots,x_{i_k}$ and $m$ is the smallest number such that on servicing these $m$ tasks without any idling after $i_k$-th task, one has $x_{i_k+m} \geq x_{i_1}$. An upper bound on the time for the static problem under $u''$ is 
%\vspace{-0.1cm}
\begin{equation}
\label{eq:upp-bound}
\begin{split}
\nofreetime(x_{i_1},\tau,m+i_k-i_1,u'') \leq & t_{i_k}-t_{i_1}+\tau \log (1-x_{i_1})  - \tau \log (1-x_{i_k}) \\ & +\smax  -\tau \log x_{i_1}, 
\end{split}
\end{equation}
where $t_{i_k}-t_{i_1}$ is the time between the beginning of service of tasks $i_1$ and $i_k$, $\tau \left(\log (1-x_{i_1})-\log(1-x_{i_k}) \right)$ is the time required for the server state to increase from $x_{i_k}$ to $x_{i_1}$ under continuous usage, $\smax$ is the upper bound on the \emph{overshoot} time that could happen if the server is in middle of servicing a task while crossing state $x_{i_1}$ and $-\tau \log x_{i_1}$ is the upper bound on the time taken by the server to idle back to state $x_{i_1}$ in case of the overshoot.
Equation~\eqref{eq:upp-bound} can be rewritten as $t_{i_k}-t_{i_1} \geq  \nofreetime(x_{i_1},\tau,m+i_k-i_1,u'') -\smax + \tau \log (1-x_{i_k}) + \tau \log x_{i_1} \geq \nofreetime(x_{i_1},\tau,m+i_k-i_1,u'') - \smax -\const_1 - \const_2$, where the last inequality follows from Equation~\eqref{eq:kappa-def}.

Combining these bounds on $t_{i_k}-t_{i_1}$ with lemma~\ref{prop:constrained-time-bound}, we have that, for all $k \geq 1$,
\begin{equation}
t_{i_k}-t_{i_1} \geq \left\{\begin{array}{ll} 
  \frac{i_k-i_1}{\lambdaeq(\tau)} - \const_1 & \mbox{if } x_{i_k} \geq x_{i_1},\\
   \frac{m+i_k-i_1}{\lambdaeq(\tau)} - \smax - \const_1 - \const_2 & \mbox{otherwise}.
   \end{array}  \right.
   \label{eq:t-bound}
\end{equation}

With $\const=\const_1 + \const_2 + \smax$, Equation~\eqref{eq:t-bound} becomes
%\vspace{-0.2cm}
\begin{equation}
\label{eq:t-ineq}
t_{i_k}-t_{i_1} \geq  \frac{i_k-i_1}{\lambdaeq(\tau)} - \const \quad \forall k \geq 1.
\end{equation}
For $k \geq 1$, let $n_k$ be the queue length at the beginning of service of task $i_k$. Then one can write
$n_k \geq n_1+ \lambda (t_{i_k}-t_{i_1}) - (i_k-i_1) \quad \forall k \geq 1$. This with Equation~\eqref{eq:t-ineq} gives
%\vspace{-0.2cm}
\begin{equation}
\label{eq:t-ineq2}
n_k \geq n_1-\lambda \const + (i_k-i_1) \left(\frac{\lambda}{\lambdaeq(\tau)}-1 \right) \quad \forall k \geq 1.
\end{equation}
From Equation~\eqref{eq:t-ineq2}, we get that, for $\lambda>\lambdaeq(\tau)$, $\lim_{k \to +\infty} n_k = +\infty$. The theorem follows from the fact that $\limsup_{t \to +\infty} n_u(t,\tau,\lambda,x_0,n_0) \geq \lim_{k \to +\infty} n_k$.
\end{proof}

\begin{remark}
\begin{enumerate}
\item Theorem~\ref{thm:lambda-upperbound} implies that, for a given $\tau>0$, if $\xth(\tau)<1$ then $\lambdamax^*(\tau) = \lambdaeq(\tau)$. 
\item If $\xth(\tau)=1$, then one can show that, for any $\epsilon>0$, there exists no stabilizing task release control policy for arrival rates greater than $\lambdaeq(\tau)+ \epsilon$, i.e., $\lambdamax^*(\tau) \leq \lambdaeq(\tau)+ \epsilon$.
\end{enumerate}
\end{remark}

In the next section, we propose a simple task release control policy and prove that it is maximally stabilizable, i.e., for any $\lambda \leq \lambdaeq(\tau)$, it ensures that the dynamical queue is stable.

\section{Control Policy and Lower Bound on the Stabilizable Arrival Rate}
\label{sec:control}
Consider the following threshold policy:
\begin{equation*}
\utb(t) =  \left\{\begin{array}{ll} 
  \on  & \mbox{if } x(t) \leq \xth(\tau),\\
   \off & \mbox{otherwise},
\end{array}  \right.
\end{equation*}
where $\xth(\tau)$ is defined in Equation~\eqref{eq:lambdaeq-def}. 
We now prove that this threshold policy is maximally stabilizing. 

\begin{theorem}
\label{thm:lambda-lowerbound}
For any $\tau>0$, $x_0 \in [0,1]$, $n_0 \in \natural$ and $\lambda \leq \lambdaeq(\tau)$, if $\xth(\tau)<1$ then we have that
$\limsup_{t \to +\infty} n_{\utb} (t,\tau,\lambda,x_0,n_0) < +\infty$.
\end{theorem}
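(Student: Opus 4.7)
The strategy is to show that under $\utb$, after a bounded transient, the server enters a periodic regime in which every task-to-task cycle lasts exactly $1/\lambdaeq(\tau)$. Since arrivals occur at rate $\lambda \le \lambdaeq(\tau)$, departures in this regime match or exceed arrivals, so the queue cannot grow unboundedly; the transient contributes only a bounded excess.

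First I would analyze the iteration $x_i \mapsto x_{i+1}$, where $x_i$ is the server state at the start of service of the $i$-th task within a single busy period. Under $\utb$ one always has $x_i \le \xth(\tau)$. By the very definition of one-task equilibrium, whenever $x_i = \xth(\tau)$, the end-of-service state $x_i' := 1-(1-x_i)e^{-\servicetimefunc(x_i)/\tau}$ strictly exceeds $\xth(\tau)$, the server idles back down to $\xth(\tau)$ where $\utb$ releases the next waiting task immediately, and the total elapsed time $\servicetimefunc(\xth(\tau)) + \tau\log(x_i'/\xth(\tau))$ equals exactly $1/\lambdaeq(\tau)$; in particular $x_{i+1} = \xth(\tau)$, so the fixed point is absorbing. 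When $x_i < \xth(\tau)$ with $x_i' \le \xth(\tau)$, by contrast, the next task is released immediately at $x_{i+1} = x_i'$, and the increment satisfies
\[
x_{i+1}-x_i \;=\; (1-x_i)\bigl(1-e^{-\servicetimefunc(x_i)/\tau}\bigr) \;\ge\; (1-\xth(\tau))\bigl(1-e^{-\smin/\tau}\bigr) \;=:\; \delta_0 \;>\; 0.
\]
Thus $x_i$ strictly increases by at least $\delta_0$ on every consecutive step in this regime, so after at most $N_0 := \lceil \xth(\tau)/\delta_0 \rceil$ such steps either $x_{i+1}=\xth(\tau)$ directly, or the next end-of-service state exceeds $\xth(\tau)$ and forces $x_{i+1} = \xth(\tau)$ via the idle-back mechanism. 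Hence within any busy period the absorbing regime is reached in at most $N_0 + 1$ tasks.

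Second, I would translate this into a uniform bound on the queue length. Each of the at most $N_0+1$ transient cycles has length bounded by $C_1 := \smax + \tau\log(1/\xth(\tau))$, plus possibly a one-time initial idle of length at most $\tau \log(1/\xth(\tau))$ to bring $x$ down to $\xth(\tau)$ if the busy period begins above threshold. All subsequent cycles have length exactly $1/\lambdaeq(\tau)$. Consequently, for any time $t$ in a busy period starting at time $s$, the number of service completions in $[s,t]$ is at least $\lambdaeq(\tau)(t-s) - C$ for a constant $C$ depending only on $\tau$ and $\servicetimefunc$. Combining with the arrival count $\lambda(t-s)+1$ and $\lambda \le \lambdaeq(\tau)$, the queue length in the busy period satisfies $n_{\utb}(t,\tau,\lambda,x_0,n_0) \le n_0 + C + 1$. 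Outside busy periods $n_{\utb} = 0$, so the same bound holds for all $t \ge 0$, giving $\limsup_{t\to\infty} n_{\utb}(t,\tau,\lambda,x_0,n_0) < +\infty$.

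The main obstacle I expect is securing the strictly positive uniform lower bound $\delta_0$ on the transient increment, since this is exactly where the standing assumption $\xth(\tau)<1$ and the positivity of $\smin$ enter in an essential way: if $\xth(\tau)=1$, the factor $1-x_i$ could become arbitrarily small and the iteration could converge to $1$ without ever exiting the ``increasing'' regime, and likewise without $\smin>0$ the factor $1-e^{-\servicetimefunc(x_i)/\tau}$ could degenerate. Both scenarios would destroy the finite-transient argument on which the whole stability proof rests.
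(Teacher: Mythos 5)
Your proof is correct, and it follows the same two-phase skeleton as the paper (transient phase with server state rising toward the threshold, followed by an absorbing regime at $\xth(\tau)$ in which every cycle lasts exactly $1/\lambdaeq(\tau)$). Where you diverge is in how you bound the transient. You establish a uniform per-step increment
\[
x_{i+1}-x_i \;\ge\; (1-\xth(\tau))\bigl(1-e^{-\smin/\tau}\bigr) \;=:\; \delta_0 \;>\;0,
\]
count at most $N_0=\lceil \xth(\tau)/\delta_0\rceil$ transient steps, and multiply by a per-cycle time bound. The paper instead bypasses step-counting entirely: since the server is never idle while the queue is non-empty and $x<\xth(\tau)$, the continuous-time ODE solution $x(t)=1-(1-x_1)e^{-t/\tau}$ gives a direct bound $-\tau\log(1-\xth(\tau))+\smax$ on the elapsed \emph{time} before the state crosses $\xth(\tau)$, independent of how many tasks that takes. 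The paper's bound is tighter and shorter; your step-counting bound $(N_0+1)C_1$ is looser, and your per-cycle estimate $C_1 = \smax + \tau\log(1/\xth(\tau))$ actually over-charges each transient cycle with a potential idle that only occurs once, after the overshoot. That said, your derivation of $\delta_0$ makes the role of the standing hypothesis $\xth(\tau)<1$ (and of $\smin>0$) especially transparent, which is a genuine pedagogical advantage. One small point worth tightening in your write-up: during the transient regime the server never idles, so the transient cycles have length $\servicetimefunc(x_i)\le\smax$, not $C_1$; the $-\tau\log\xth(\tau)$ idle term belongs to the single post-overshoot relaxation step, exactly as in the paper's accounting.
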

\begin{proof}
Let $x_i$ and $t_i$ be the server state and time instants respectively at the beginning of service of the $i$-th task. For brevity in notation, let $n(t)$ be the queue length at time $t$. For any $x_0 \in [0,1]$ and $n_0 \in \natural$, considering the possibility when $x_0>\xth(\tau)$ we have that $n(t_1)=\max\{0,n_0-1,n_0-1+\lfloor \lambda \tau \log (x_0/\xth) \rfloor\}$.
We now prove that $n(t_i) \leq n(t_1) +\lceil \left(\lambda -1/\smax \right) \left( -\tau \log (1-\xth) + \smax\right) \rceil+ \lceil -\lambda \tau \log \xth \rceil$ for all $i$ through the following two cases:
\begin{itemize}
\item \textbf{State 1:} $x_1 = \xth$. While $n(t_i)>0$, we have that $x_{i+1}=\xth$ and $t_{i+1}-t_i=\nofreetime(\xth,\tau,1,\utb)=1/\lambdaeq(\tau)$. Therefore, if $\lambda = \lambdaeq(\tau)$, then the arrival rate is same as the service rate and hence $n(t_i) \equiv n(t_1)$ for all $i$. If $\lambda < \lambdaeq(\tau)$, then the service rate is greater than the arrival rate and hence there exists an $i' \geq 1$ such that $n(t_{i}) < n(t_{i-1})$ for all $i \leq i'$ and $n\left(t_{i'}+1/\lambdaeq(\tau)\right)=0$ and hence $x_{i'+1}<\xth$. Thereafter, we appeal to the next case by resetting $x_{i'+1}$ and $t_{i'+1}$ as $x_1$ and $t_1$ respectively. Moreover, with these notations, $n(t_1)=0$.
\item \textbf{State 2:} $x_1<\xth$. While the queue length is non-zero, the server is never idle. The maximum amount of continuous service time required for the server state to cross $\xth$ starting from any $x_1 <\xth$ is upper bounded by $-\tau \log (1-\xth) + \smax$, where $-\tau \log (1-\xth)$ is the time to go from $x=0$ to $x=\xth$ when the server is continuously busy, and the $\smax$ term accounts for the fact that the server might be in middle of servicing a task when it reaches $x=\xth$ and hence its server state will exceed $\xth$ before it finishes the task. The maximum amount of time spent in such an \emph{overshoot} is upper bounded by $\smax$. Since $1/\smax$ is the minimum rate at which the server will be servicing the tasks during this time, the increase in the queue length during this time is upper bounded by $\lceil \left(\lambda -1/\smax \right) \left( -\tau \log (1-\xth) + \smax\right) \rceil$. Under the threshold policy, the possible overshoot above $\xth$ will be followed by an idle time which is upper bounded by $-\tau \log \xth$, at the end of which the server state is $\xth$. The increase in queue length during this time  is upper bounded by $ \lceil -\lambda \tau \log \xth \rceil$. Therefore, the maximum queue length when the server state reaches $\xth$ is upper bounded by $n_1+\lceil \left(\lambda -1/\smax \right) \left( -\tau \log (1-\xth) + \smax\right) \rceil+ \lceil -\lambda \tau \log \xth \rceil$. Thereafter, we appeal to the earlier case by resetting $x_1=\xth$ and $n_1$ to be the number of outstanding tasks when the server state reaches $\xth$.
\end{itemize}
In summary, when the system is in State 1, if $\lambda= \lambdaeq(\tau)$, it stays there with constant queue length, else, the queue length monotonically decreases to zero at which point it enters State 2. When the system is in State 2, it stays in it for ever or eventually  enters State 1 with bounded queue length. Collecting these facts, we arrive at the result.
\end{proof}

\begin{remark}
\label{rem:control}
\begin{enumerate}
\item From Theorem~\ref{thm:lambda-upperbound} and Theorem~\ref{thm:lambda-lowerbound}, one can deduce that, for any $\tau>0$, $\lambdamax^*(\tau)=\lambdaeq(\tau)$, and the threshold policy is a maximally stabilizing task release control policy.
\item In general, for a given $\lambda' \leq \lambdaeq(\tau)$, the threshold policy with the threshold value set at any value in $[\stablexeq(\tau,\lambda'),\unstablexeq(\tau,\lambda')]$ would ensure stability of the queue for all values of $\lambda \leq \lambda'$. 
\item If $\xth(\tau)=1$, then one can show that, given $\epsilon>0$, there exists a $\delta(\epsilon)>0$ such that the threshold policy with the threshold value set at $1-\delta(\epsilon)$ ensures stability of the queue for all arrival rates less than or equal to $\lambdaeq(\tau)-\epsilon$.

\end{enumerate}
\end{remark}

\section{Conclusions}
\label{sec:conclusions}
In this paper, we studied the stability problem of a dynamical queue whose service times are dependent on the state of a simple underlying dynamical system. The model for the service times is loosely inspired by the performance of a human operator in a persistent mission. We proposed a simple task release control policy for such a dynamical queue and proved that it ensures stability of the queue for the maximum possible arrival rate. In future, we plan to extend the analysis here to stochastic inter-arrival and service times and to general server dynamics. We also plan to design control policies for such queues that optimize other qualities of service such as average waiting time of an incoming task using, for example, the flexibility in choosing the threshold, as noted in part (ii) of Remark~\ref{rem:control}.  

%\section{Acknowledgements}
%This work was supported in part by the Michigan/AFRL Collaborative Center on Control Science, AFOSR grant no. FA 8650-07-2-3744.
%The authors thank Thomas Temple for helpful discussions. 
%Any opinions, findings, and conclusions or recommendations expressed in this publication are those of the authors and do not necessarily reflect the views of the supporting organizations.

{\small
  \bibliographystyle{ieeetr}%
  \bibliography{alias,efmain,frazzoli}
}

\end{document}